\begin{document}
\title{Coherent states and entropy\thanks{Partially supported by a 2022 Bridge grant, University of Western Ontario, proposal ID 53917.}}
%
%\titlerunning{Abbreviated paper title}
% If the paper title is too long for the running head, you can set
% an abbreviated paper title here
%
\author{Tatyana Barron\inst{1} \and
Alexander Kazachek\inst{2,3}}
\authorrunning{T. Barron and A. Kazachek}
% First names are abbreviated in the running head.
% If there are more than two authors, 'et al.' is used.
%
\institute{Department of Mathematics, University of Western Ontario, London, Ontario, Canada N6A 5B7 
\email{tatyana.barron@uwo.ca}\\ \and 
Department of Mathematics, University of Western Ontario, London, Ontario, Canada N6A 5B7 
\email{akazache@uwo.ca}\\
\and
 Department of Applied Mathematics, University of Waterloo, Waterloo, Ontario, Canada N2L 3G1}
\maketitle              % typeset the header of the contribution
\begin{abstract}
Let $H_k$, $k\in {\mathbb{N}}$, be the Hilbert spaces of geometric quantization on a K\"ahler manifold $M$.  
With two points in  $M$ we associate a Bell-type state $b_k \in H_k\otimes H_k$. When $M$ is compact or when $M$ is ${\mathbb{C}}^n$, we provide positive lower bounds for the entanglement entropy of $b_k$ (asymptotic in $k$, as $k\to\infty$).   

\keywords{entanglement  \and  Hilbert spaces \and asymptotics .}
\end{abstract}

\section{Introduction and main results} Let $H_k$, $k=1,2,3,...$ be the Hilbert spaces of K\"ahler quantization on a K\"ahler manifold $M$. Let $p,q\in M$ and 
$\Theta_p^{(k)}\in H_k$ and $\Theta_q^{(k)}\in H_k$ be the coherent states at $p$ and $q$ respectively. With the pair $p,q$ we associate the Bell-type pure state 
\begin{equation}
\label{statew}
w_k=w_k(p,q)=\frac{1}{||\Theta_p^{(k)}||^2}\Theta_p^{(k)}\otimes \Theta_p^{(k)}+\frac{1}{||\Theta_q^{(k)}||^2}\Theta_q^{(k)}\otimes \Theta_q ^{(k)}\in H_k\otimes H_k. 
\end{equation}
It is entangled. The two theorems below are for the cases when $M$ is compact and when $M$ is ${\mathbb{C}}^n$ ($n\in{\mathbb{N}}$), respectively.  
We address the question how the entanglement entropy $E_k$ \cite{alieb} of 
\begin{equation}
\label{stateb}
b_k=b_k(p,q)=\frac{1}{||w_k||}w_k\in H_k\otimes H_k
\end{equation}
depends on the quantum parameter $k$ and on the distance between $p$ and $q$, ${\mathrm{dist}}(p,q)$. The theorems provide positive lower bounds  on $E_k(b_k)$. In quantum information theory, when quantum systems are used for communication, one interpretation of entanglement entropy is the amount of information that can be transmitted. Bell states are maximally entangled (e.g.  
\begin{equation}
\label{bellstate}
\frac{1}{\sqrt{2}}(e_0\otimes e_0+e_1\otimes e_1)
\end{equation}
is one of the standard  Bell states). 

Our state  (\ref{statew}) is constructed from the coherent vectors $\Theta_p^{(k)}$ and $\Theta_q^{(k)}$ which are typically not orthogonal to each other (unlike 
$e_0$ and $e_1$ in the Bell state (\ref{bellstate})), although $\langle \Theta_p^{(k)},\Theta_q^{(k)}\rangle \to 0$ as $k\to \infty$.   

To provide some background, in quantum information theory, given a nonzero vector $v$ in the tensor product of two separable Hilbert spaces $V_1$ and $V_2$, its entanglement entropy $E(v)$  characterizes "how nondecomposable" (or, in other words, how entangled) this vector is. It is defined as follows. For the purposes of this paper we only need the case $V_1=V_2=V$. Let $\langle .,.\rangle$ be the inner product in $V$. 
Let  $\{ e_i\}$ be an orthonormal basis in $V$. Let $Tr_2(A)\in End(V)$ denote the partial trace of a density matrix $A$. It is 
defined by 
$$
\langle x, Tr_2(A)y\rangle=\sum_i \langle x\otimes e_i, A(y\otimes e_i)\rangle 
$$ 
for every $x,y\in V$. The entanglement entropy is 
$$
E(v)=-Tr(\rho\ln \rho)=-\sum_i\langle (\rho \ln \rho ) e_i, e_i\rangle 
$$ 
where 
$$
\rho=Tr_2 (P_v)
$$
and $P_v$ is the rank $1$ orthogonal projection onto the $1$-dimensional linear subspace of $V\otimes V$ spanned by $v$. The operator $\rho\ln\rho$ is defined via the continuous functional calculus.  When $V$ is finite-dimensional, $E(v)$ is a real number in the interval $[0, \ln dim (V)]$. When $V$ is infinite-dimensional, $E(v)$ is a nonnegative real number or $+\infty$. The value of $E(v)$ does not depend on the choice of the basis $\{ e_i\}$.  

K\"ahler quantization deals with asymptotic analysis on K\"ahler manifolds in the context of classical mechanics and quantum mechanics. Let 
$(M,\omega)$ be an integral K\"ahler manifold. Let  $L\to M$ be a holomorphic line bundle whose first Chern class is represented by $\omega$. One can consider $(M,\omega)$ as a classical phase space, i.e. a space that parametrizes position and momentum of a classical particle. Classical mechanics on $M$ is captured in $\omega$ and a choice of a Hamiltonian (a smooth function on $M$). The symplectic form defines a Poisson bracket on $C^{\infty}(M)$. Dirac's correspondence principle  seeks a linear map from $C^{\infty}(M)$ to linear operators on the Hilbert space of quantum mechanical wave functions that takes the Poisson bracket of functions to the commutator of operators. In geometric quantization or K\"ahler quantization, a standard choice of the Hilbert space $V$ is the space of holomorphic sections of $L^k$, where the positive integer $k$ is interpreted (philosophically) as $1/\hbar$. If $M$ is compact, then $V$ is finite-dimensional.   If $M$ is noncompact, then $V$ is infinite-dimensional.   

The motivation to bring techniques from quantum information theory to geometric quantization was to obtain new insights in the interplay between geometry and analysis on K\"ahler manifolds. It would be interesting to investigate if there is a meaningful relationship between the information-theoretic entropy and other concepts of entropy. In the opposite direction, some geometric intuition may be useful in information transmission problems.  

\subsection{Compact case} 
\label{seccompact}
Let $L\to M$ be a positive hermitian holomorphic line bundle on a compact $n$-dimensional complex manifold $M$.  
Denote by $\nabla$ the Chern connection in $L$. Equipped with 
the $2$-form $\omega=i \ {\mathrm{curv}}(\nabla)$, $M$ is a K\"ahler manifold. Denote by $d\mu$ the measure on $M$ associated with the volume form 
$\dfrac{\omega^n}{n!}$. As before, let $k$ be a positive integer. For $p$, $q$ in $M$, let $\Theta_p^{(k)},\Theta_q^{(k)}\in H_k=H^0(M,L^k)$ be Rawnsley coherent states at $p$ and $q$ (see e.g. \cite{berss}). 

Let us recall the definition of $\Theta_p^{(k)}$ for $p\in M$ and $k\in{\mathbb{N}}$. Choose a unit vector $\xi\in L$. Then by Riesz representation theorem there is a unique vector $\Theta_p^{(k)}$ in the Hilbert space $H^0(M,L^k)$ with the property 
$$ 
\langle s,\Theta_p^{(k)}\rangle=\langle s(p),\xi^{\otimes k}\rangle 
$$
for every $s\in H^0(M,L^k)$. 

The $k\to\infty$ asymptotics of the norms  $||\Theta_p^{(k)}||$, $||\Theta_q^{(k)}||$ are determined by the asymptotics of the Bergman kernels for $L^k$ on the diagonal. We take these asymptotics from \cite{zeld}. Asymptotic bounds for the inner products $\langle \Theta_p^{(k)},\Theta_q^{(k)}\rangle$ 
can be obtained from the off-diagonal estimates on the Bergman kernels \cite{mamar}.    
\begin{theorem}
\label{thcompact}Suppose  $L\to M$ is a positive hermitian holomorphic line bundle on a compact $n$-dimensional complex manifold $M$.  
Let $p,q\in M$. Let $k\in {\mathbb{N}}$. Then there is the following (positive) lower bound for the entanglement entropy of the pure state $b_k(p,q)$ (\ref{stateb}). 
There are positive constants $C_1$ and $C_2$ that depend on $M$ and $\omega$ such that
as $k\to \infty$ 
$$
E_k(b_k)\ge \frac{1}{2}(1-C_1e^{-C_2\sqrt{k} \ {\mathrm{dist}}(p,q) } )^4.
$$ 
\end{theorem}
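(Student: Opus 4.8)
The plan is to reduce the whole question to the single scalar
$t=\langle \Theta_p^{(k)},\Theta_q^{(k)}\rangle/(\|\Theta_p^{(k)}\|\,\|\Theta_q^{(k)}\|)$
and then to a two‑by‑two linear‑algebra computation. First I would set $u=\Theta_p^{(k)}/\|\Theta_p^{(k)}\|$ and $v=\Theta_q^{(k)}/\|\Theta_q^{(k)}\|$ (these unit vectors exist for $k$ large, since the Bergman kernel is positive on the diagonal); then $\frac{1}{\|\Theta_p^{(k)}\|^2}\Theta_p^{(k)}\otimes\Theta_p^{(k)}=u\otimes u$ and similarly for $q$, so $w_k=u\otimes u+v\otimes v$ and the norms of the coherent states disappear entirely. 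We may assume $p\ne q$; then $u,v$ are linearly independent and $|t|<1$ (for $p=q$ the vector $b_k$ is a product vector with $E_k(b_k)=0$). Expanding the inner product, $\|w_k\|^2=2+2\,\mathrm{Re}(t^2)$, which lies in $(0,4]$ since $\mathrm{Re}(t^2)\ge-|t|^2>-1$.

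Next I would compute the reduced density matrix $\rho=Tr_2(P_{b_k})$. Since $b_k$ lies in $(\mathrm{span}\{u,v\})^{\otimes2}$, the operator $\rho$ is supported on the at most two-dimensional space $\mathrm{span}\{u,v\}$. Fixing an orthonormal basis $\{e_1,e_2\}$ of this span with $u=e_1$ and $v=t\,e_1+s\,e_2$, $s=\sqrt{1-|t|^2}\ge0$, one has $b_k=\|w_k\|^{-1}\sum_{i,j}A_{ij}\,e_i\otimes e_j$ with the matrix $A=\left(\begin{smallmatrix}1+t^2 & ts\\ ts & s^2\end{smallmatrix}\right)$, and hence $\rho=\|w_k\|^{-2}AA^{*}$. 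Therefore $Tr(\rho)=1$, and since $\det A=(1+t^2)s^2-t^2s^2=s^2=1-|t|^2$ we get the clean formula
\[
\det\rho=\frac{|\det A|^2}{\|w_k\|^4}=\left(\frac{1-|t|^2}{2+2\,\mathrm{Re}(t^2)}\right)^{2}.
\]

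Now let $\lambda_\pm$ be the eigenvalues of $\rho$; they are positive with $\lambda_++\lambda_-=1$. Applying the elementary inequality $-x\ln x\ge x(1-x)$ (that is, $\ln x\le x-1$) to $\lambda_+$ and $\lambda_-$ yields
\[
E_k(b_k)=-\lambda_+\ln\lambda_+-\lambda_-\ln\lambda_-\ \ge\ 2\lambda_+\lambda_-=2\det\rho .
\]
Then, using $\mathrm{Re}(t^2)\le|t|^2$ together with the scalar estimate $\frac{1-a}{1+a}\ge(1-\sqrt a)^{2}$ for $a\in[0,1]$, I would chain
\[
2\det\rho=\frac12\left(\frac{1-|t|^2}{1+\mathrm{Re}(t^2)}\right)^{2}\ \ge\ \frac12\left(\frac{1-|t|^2}{1+|t|^2}\right)^{2}\ \ge\ \frac12\,(1-|t|)^{4}.
\]

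Finally I would bring in the Bergman kernel asymptotics: the on-diagonal expansion of \cite{zeld} gives $\|\Theta_p^{(k)}\|^2=B_k(p,p)\ge c\,k^{n}$ uniformly in $p$ for $k$ large, while the off-diagonal decay of \cite{mamar} gives $|\langle\Theta_p^{(k)},\Theta_q^{(k)}\rangle|=|B_k(q,p)|\le C\,k^{n}e^{-c'\sqrt{k}\,\mathrm{dist}(p,q)}$ uniformly; dividing yields $|t|\le C_1 e^{-C_2\sqrt{k}\,\mathrm{dist}(p,q)}$ for constants $C_1,C_2>0$ depending only on $M$ and $\omega$. For $k$ large enough that $|t|<1$, this gives $(1-|t|)^{4}\ge(1-C_1 e^{-C_2\sqrt{k}\,\mathrm{dist}(p,q)})^{4}$, and combining with the previous displays proves the theorem. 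The only ingredient that is not elementary is the off-diagonal Bergman estimate of \cite{mamar}; the rest is the two-dimensional computation of $\rho$ (where the pleasant simplification is $\det A=1-|t|^2$) and the string of scalar inequalities, so I do not expect a genuine obstacle. The one point needing care is that $C_1,C_2$ must be uniform in $p$ and $q$, which is exactly what the cited Bergman asymptotics provide.
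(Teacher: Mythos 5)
Your proof is correct and follows essentially the same route as the paper: a Gram--Schmidt reduction of $w_k$ to a $2\times 2$ matrix on $\mathrm{span}\{u,v\}$, the bound $E\ge 2\lambda_1\lambda_2$ obtained from $-x\ln x\ge x(1-x)$ with $\lambda_1+\lambda_2=1$, and the Zelditch on-diagonal and Ma--Marinescu off-diagonal Bergman estimates (the paper's Theorem~\ref{entropyth} is exactly your two-dimensional lemma in unnormalized form). Normalizing $u,v$ first and reading off $2\lambda_1\lambda_2=2\det\rho$ from $\det A=1-|t|^2$ is a clean shortcut past the paper's explicit eigenvalue computation, but it is the same argument.
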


\subsection{$M={\mathbb{C}}^n$} 
Let $M={\mathbb{C}}^n$, $n\ge 1$. We will use the notations 
$$
z^T\bar{w}=z_1\bar w_1+...+z_n\bar w_n
$$
or 
$$
\langle z,w\rangle =z_1\bar w_1+...+z_n\bar w_n
$$
and 
$$
|z|=\sqrt{ z^T\bar{z}}
$$
or
$$
||z||=\sqrt{ z^T\bar{z}}
$$
for $z,w\in {\mathbb{C}}^n$. 

For $k\in {\mathbb{N}}$ let $H_k$ be the Segal-Bargmann space that consists of holomorphic functions on $M$ with the inner product 
$$
\langle f,g\rangle =\left (\frac{k}{\pi}\right )^n\int_Mf(z)\overline{g(z)}e^{-k|z|^2}dV(z), 
$$
where $dV$ is the Lebesgue measure on ${\mathbb{R}}^{2n}$. It is a reproducing kernel Hilbert space. For $p\in {\mathbb{C}}^n$ the coherent vector at $p$  is 
$\Theta_p^{(k)}\in H_k$ defined by 
\begin{equation}
\label{tetap}
\Theta_p^{(k)}(z)=e^{kz^T\bar p }.
\end{equation}
It is defined by the property 
\begin{equation}
\label{reprod}
\langle f,\Theta_p^{(k)}\rangle=f(p)
\end{equation}
for every $f\in H_k$. Similarly for $q\in M$ the coherent vector at $q$ is
\begin{equation}
\label{tetaq}
\Theta_q^{(k)}(z)=e^{kz^T\bar q }.
\end{equation}
\begin{theorem}
\label{thcn}
Let $p,q\in {\mathbb{C}}^n$. Let $k\in {\mathbb{N}}$. 
Let $\Theta_p^{(k)}$ and $\Theta_p^{(k)}$ be the coherent states at $p$ and $q$ respectively (\ref{tetap}), (\ref{tetaq}).   
Then there is the following (positive) lower bound for the entanglement entropy of the pure state $b_k(p,q)$ (\ref{stateb}): 
as $k\to \infty$ 
$$
E_k(b_k)\ge \frac{1}{2} (1-e^{-k |p-q| ^2} )^4.
$$ 
\end{theorem}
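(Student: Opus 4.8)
\medskip
\noindent\textit{Proof outline.}
The plan is to reduce everything to a $2\times2$ linear-algebra computation. First I would record the inner products that come directly from the reproducing property $\langle f,\Theta_p^{(k)}\rangle=f(p)$: one gets $\|\Theta_p^{(k)}\|^2=\Theta_p^{(k)}(p)=e^{k|p|^2}$, likewise $\|\Theta_q^{(k)}\|^2=e^{k|q|^2}$, and $\langle\Theta_p^{(k)},\Theta_q^{(k)}\rangle=\overline{\Theta_q^{(k)}(p)}=e^{kq^T\bar p}$. Writing $u_p=\Theta_p^{(k)}/\|\Theta_p^{(k)}\|$ and $u_q=\Theta_q^{(k)}/\|\Theta_q^{(k)}\|$ for the unit coherent vectors, one then has $w_k=u_p\otimes u_p+u_q\otimes u_q$ and, using $|p-q|^2=|p|^2+|q|^2-2\,\mathrm{Re}(p^T\bar q)$, $|\langle u_p,u_q\rangle|=e^{-\frac{k}{2}|p-q|^2}=:t$, so that $t^2=e^{-k|p-q|^2}$. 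If $p=q$ the asserted bound reads $E_k(b_k)\ge 0$ and there is nothing to prove; otherwise $u_p,u_q$ are linearly independent, $W:=\mathrm{span}\{u_p,u_q\}$ is two-dimensional, $w_k\in W\otimes W$, and the reduced density matrix $\rho=\mathrm{Tr}_2(P_{b_k})$ is supported on $W$, hence has at most two nonzero eigenvalues.

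Next I would work inside $W$. Pick an orthonormal basis $e_1,e_2$ of $W$ with $e_1=u_p$ and $u_q=\bar\sigma\,e_1+\eta\,e_2$, where $\sigma=\langle u_p,u_q\rangle$ has $|\sigma|=t$ and $\eta=\sqrt{1-t^2}\ge 0$ (a phase being absorbed into $e_2$). Expanding $w_k=\sum_{i,j}M_{ij}\,e_i\otimes e_j$ produces the symmetric matrix $M=\left(\begin{smallmatrix}1+\bar\sigma^2 & \bar\sigma\eta\\ \bar\sigma\eta & \eta^2\end{smallmatrix}\right)$, and unwinding the definition of the partial trace gives $\rho=\frac{1}{N}MM^\dagger$ with $N=\|w_k\|^2=\mathrm{Tr}(MM^\dagger)=2+2\,\mathrm{Re}(\sigma^2)$. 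Since $\det M=\eta^2=1-t^2$, the two eigenvalues $\mu_\pm$ of $\rho$ satisfy $\mu_++\mu_-=1$ and $\mu_+\mu_-=\det\rho=(1-t^2)^2/N^2$, and $E_k(b_k)=-\mu_+\ln\mu_+-\mu_-\ln\mu_-$.

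To finish I would invoke the elementary inequality $-x\ln x-(1-x)\ln(1-x)\ge 4\ln 2\cdot x(1-x)$ on $[0,1]$ (the binary entropy dominates the parabola through $(0,0)$, $(\tfrac12,\ln 2)$, $(1,0)$), obtaining $E_k(b_k)\ge 4\ln 2\cdot\mu_+\mu_-=4\ln 2\cdot(1-t^2)^2/N^2$. Combining this with $N=2+2\,\mathrm{Re}(\sigma^2)\le 2+2|\sigma|^2=2(1+t^2)$ gives $E_k(b_k)\ge\ln 2\cdot(1-t^2)^2/(1+t^2)^2$, and the remaining inequality $\ln 2\cdot(1-t^2)^2/(1+t^2)^2\ge\frac12(1-t^2)^4$ reduces, after cancelling $(1-t^2)^2$, to $2\ln 2\ge(1-t^4)^2$, which holds since $(1-t^4)^2\le 1<2\ln 2$ for $t\in[0,1]$. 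Recalling $t^2=e^{-k|p-q|^2}$ yields the claimed bound (in fact for every $k$, not merely as $k\to\infty$).

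I expect the real work to be bookkeeping rather than depth: carefully identifying $w_k$ as a vector of the two-dimensional space $W\otimes W$, and keeping the partial-trace and inner-product conventions consistent throughout. On the estimation side the one genuine subtlety is to use inequalities sharp enough near $t=0$, where $\mu_+\approx\mu_-\approx\frac12$ and $E_k(b_k)\approx\ln 2$: both the constant $4\ln 2$ in the binary-entropy bound and the sharp bound $N\le 2(1+t^2)$ (rather than the cruder $N\le 4$) are needed, since otherwise the final estimate fails when $|p-q|$ is small.
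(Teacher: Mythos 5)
Your proposal is correct and follows essentially the same route as the paper: the paper first proves a general lemma (Theorem~\ref{entropyth}) by reducing to the $2$-dimensional span of the two coherent vectors, extracting the two eigenvalues of the reduced density matrix from a $2\times 2$ coefficient matrix, and bounding the binary entropy below by a quadratic (via $-\ln x>1-x$, giving $E\ge 2\lambda_1\lambda_2$), then substitutes $\|\Theta_p^{(k)}\|^2=e^{k|p|^2}$ and $\langle\Theta_p^{(k)},\Theta_q^{(k)}\rangle=e^{kq^T\bar p}$ exactly as you do. Your only deviations are cosmetic --- you inline that lemma for unit vectors, use the sharper constant $4\ln 2$ in place of $2$ (the paper's constant $2$ already suffices for the final inequality $(1-t^4)^2\le 1$), and correctly observe that the bound holds for every $k$ rather than only asymptotically.
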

\begin{remark}
The change from $e^{-C\sqrt{k} \ {\mathrm{dist}}(p,q) }$ to $e^{-Ck \ {\mathrm{dist}}^2(p,q) }$ in Theorem \ref{thcn} reflects the fact 
that the latter appears in the Bergman asymptotics for real analytic metrics (see the discussion in \cite{hezari}). In the proof of Theorem \ref{thcompact} we used the Bergman kernel expansion for smooth metrics.     
\end{remark}

\section{Proofs}
\subsection{General lower bound} 
\begin{theorem}
\label{entropyth} Let $H$ be a separable Hilbert space, with the inner product $\langle .,.\rangle$. Let $u$ and $v$ be nonzero vectors in $H$, such that $u$ is not a multiple of $v$. Let
$$
w=\frac{1}{||u||^2}u\otimes u+\frac{1}{||v||^2}v\otimes v \in H\otimes H. 
$$
There is the following (positive) lower bound on the entanglement entropy $E(b)$ of the vector 
$b=\frac{1}{||w||}w$
\begin{equation}
\label{bound} 
E(b)\ge 2\frac{( ||u||^2||v||^2-|\langle u,v\rangle |^2)^2}{( 2 ||u||^2||v||^2+\langle u,v\rangle ^2+\langle v,u\rangle^2)^2}. 
\end{equation}
\end{theorem}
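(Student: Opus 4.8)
The plan is to reduce the statement to a two–by–two eigenvalue computation. First, write $\hat u = u/\|u\|$ and $\hat v = v/\|v\|$; then $w = \hat u\otimes\hat u + \hat v\otimes\hat v$, so $b$ lies in the span of the two unit product vectors $\hat u\otimes\hat u$ and $\hat v\otimes\hat v$, which are linearly independent since $u$ is not a multiple of $v$. Hence the reduced density operator $\rho = Tr_2(P_b)$ is positive, self–adjoint, of rank at most $2$, with range in $V_2 := \mathrm{span}\{\hat u,\hat v\}$; its nonzero eigenvalues are $\lambda_1,\lambda_2 \ge 0$ with $\lambda_1+\lambda_2 = 1$, and since $\rho$ has finite rank the continuous functional calculus gives without any difficulty $E(b) = -\lambda_1\ln\lambda_1 - \lambda_2\ln\lambda_2$, even in the infinite–dimensional case.

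Second, I would compute $\rho|_{V_2}$ explicitly. With $t := \langle\hat u,\hat v\rangle$, one has $\|w\|^2 = 2 + t^2 + \bar t^2$, and substituting $b = \|w\|^{-1}(\hat u\otimes\hat u + \hat v\otimes\hat v)$ into $\langle x,\rho y\rangle = \sum_i\langle x\otimes e_i, P_b(y\otimes e_i)\rangle$ and collecting terms via $\sum_i\langle e_i,\hat u\rangle\langle\hat v,e_i\rangle = \langle\hat v,\hat u\rangle$ and the like, the matrix of $\rho|_{V_2}$ in the (non–orthonormal) basis $(\hat u,\hat v)$ comes out to $\|w\|^{-2}\begin{pmatrix} 1+\bar t^2 & t+\bar t\\ t+\bar t & 1+t^2\end{pmatrix}$. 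Its trace is $1$ (a consistency check) and its determinant is $\lambda_1\lambda_2 = (1-|t|^2)^2/(2+t^2+\bar t^2)^2$; undoing the normalization via $|t|^2 = |\langle u,v\rangle|^2/(\|u\|^2\|v\|^2)$, $t^2 = \langle u,v\rangle^2/(\|u\|^2\|v\|^2)$, $\bar t^2 = \langle v,u\rangle^2/(\|u\|^2\|v\|^2)$ turns this into
$$
\lambda_1\lambda_2 = \frac{\big(\|u\|^2\|v\|^2 - |\langle u,v\rangle|^2\big)^2}{\big(2\|u\|^2\|v\|^2 + \langle u,v\rangle^2 + \langle v,u\rangle^2\big)^2}.
$$
Along the way I would record that $2\|u\|^2\|v\|^2 + \langle u,v\rangle^2 + \langle v,u\rangle^2 \ge 2\big(\|u\|^2\|v\|^2 - |\langle u,v\rangle|^2\big) > 0$ by Cauchy–Schwarz and the linear independence of $u,v$, which makes $b$, $\rho$ and the right–hand side of \eqref{bound} well defined; since also $0 < \lambda_1\lambda_2 \le 1/4$, the numbers $\lambda_1,\lambda_2$ are genuinely real and positive.

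Third, bound the binary entropy from below. Concavity of $\ln$ (Jensen's inequality with weights $\lambda_1,\lambda_2$) gives $\lambda_1\ln\lambda_1 + \lambda_2\ln\lambda_2 \le \ln(\lambda_1^2+\lambda_2^2) = \ln(1 - 2\lambda_1\lambda_2)$, and then $-\ln(1-x) \ge x$ for $x\in[0,1)$, applied at $x = 2\lambda_1\lambda_2$, yields $E(b) = -\lambda_1\ln\lambda_1-\lambda_2\ln\lambda_2 \ge 2\lambda_1\lambda_2$, which is exactly \eqref{bound}. The calculation is otherwise routine; the step most prone to slips is the explicit evaluation of the partial trace in the non–orthonormal basis $\{\hat u,\hat v\}$ and the bookkeeping that confirms $\rho$ is supported on $V_2$, so that the entropy really is a binary entropy. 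As an alternative to computing the determinant, one may instead evaluate $Tr(\rho^2)$ directly and use $1 - Tr(\rho^2) = 2\lambda_1\lambda_2$ together with the R\'enyi-entropy bound $-Tr(\rho\ln\rho) \ge -\ln Tr(\rho^2)$.
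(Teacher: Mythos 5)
Your proposal is correct, and it reaches the paper's bound by a genuinely different computational route. The paper first runs Gram--Schmidt on $\{u,v\}$ to get an orthonormal basis $e_1,e_2$ of their span, writes $w$ as a symmetric $2\times 2$ coefficient matrix $A$ in the basis $\{e_i\otimes e_j\}$, forms $A^*A$, and solves the characteristic equation explicitly for the two Schmidt eigenvalues $\lambda_{1,2}$ (square roots and all); it then applies $-\ln x> 1-x$ termwise to get $E(b)\ge 1-\lambda_1^2-\lambda_2^2=2\lambda_1\lambda_2$. You instead compute the reduced density operator $\rho=Tr_2(P_b)$ directly in the non-orthonormal basis $(\hat u,\hat v)$ and observe that only the similarity invariants matter: $\mathrm{tr}=1$ and $\det=\lambda_1\lambda_2=(1-|t|^2)^2/(2+t^2+\bar t^2)^2$, which after clearing the normalization is exactly the right-hand side of (\ref{bound}) divided by $2$; your entropy estimate ($E(b)\ge -\ln(1-2\lambda_1\lambda_2)\ge 2\lambda_1\lambda_2$, or equivalently the R\'enyi-$2$ bound) lands in the same place as the paper's $1-\lambda_1^2-\lambda_2^2$. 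What your version buys is the avoidance of the explicit eigenvalue formula and the Gram--Schmidt bookkeeping --- you never need the individual $\lambda_i$, only their product --- at the cost of having to evaluate the partial trace in a non-orthonormal basis and to argue (as you do) that trace and determinant of the matrix there are still the operator's invariants. Your matrix agrees with the correct one up to the harmless swap of $t^2$ and $\bar t^2$ on the diagonal (a conjugation/ordering convention), which does not affect trace or determinant, and your consistency checks ($\mathrm{tr}\,\rho=1$, positivity of the denominator via Cauchy--Schwarz) are sound. No gaps.
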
 
\begin{proof} 
Let $e_1, e_2$ be an orthonormal basis of the $2$-dimensional complex linear subspace spanned by $u$ and $v$,  defined as follows: 
$$
e_1=\frac{1}{||u||}u,
$$
$e_2$ is the unit vector in the direction of $v-\langle v,e_1\rangle e_1$
$$
e_2=\frac{1}{||v-\langle v,e_1\rangle e_1||} (v-\langle v,e_1\rangle e_1)=\frac{1}{\sqrt{||v||^2-|\langle v,e_1\rangle |^2 }} (v-\langle v,e_1\rangle e_1)
$$
We get: 
$$
w=\frac{||v||^2+\langle v,e_1\rangle ^2}{||v||^2}e_1\otimes e_1+
\frac{\langle v,e_1\rangle  \sqrt{||v||^2-|\langle v,e_1\rangle |^2 }   }{||v||^2}(e_1\otimes e_2+e_2\otimes e_1)
$$
$$
+\frac { ||v||^2-|\langle v,e_1\rangle |^2 } { ||v||^2} e_2\otimes e_2
 $$
 $$
 ||w||=\frac{\sqrt{ 2||v||^2+ \langle v,e_1\rangle^2 +\langle e_1,v\rangle^2 } }{||v||} 
 $$
 Denote 
 $$
 \beta=\sqrt{ 2||v||^2+ \langle v,e_1\rangle^2 +\langle e_1,v\rangle^2 }.
 $$
 We get: 
 $$
b=\frac{1}{\beta ||v|| } 
\Bigl ( (||v||^2+\langle v,e_1\rangle ^2)e_1\otimes e_1+
$$
$$
\langle v,e_1\rangle  \sqrt{||v||^2-|\langle v,e_1\rangle |^2 }   (e_1\otimes e_2+e_2\otimes e_1)
+( ||v||^2-|\langle v,e_1\rangle |^2 )e_2\otimes e_2\Bigr )
 $$
 Let
 $$
 A=
 \frac{1}{\beta ||v|| } 
 \begin{pmatrix}
 ||v||^2+\langle v,e_1\rangle ^2 & \langle v,e_1\rangle  \sqrt{||v||^2-|\langle v,e_1\rangle |^2  }   \\
 \langle v,e_1\rangle  \sqrt{||v||^2-|\langle v,e_1\rangle |^2  } &
 ||v||^2-|\langle v,e_1\rangle |^2 
 \end{pmatrix}.
 $$
 Then $A^*A=$
 $$
 \frac{1}{\beta^2}
 \begin{pmatrix}
 ||v||^2+\langle v,e_1\rangle ^2+ \langle e_1,v\rangle ^2+|\langle v,e_1\rangle |^2 & 
  \sqrt{||v||^2-|\langle v,e_1\rangle |^2  } ( \langle v,e_1\rangle +\langle e_1,v\rangle) \\ 
 \sqrt{||v||^2-|\langle v,e_1\rangle |^2  } ( \langle v,e_1\rangle +\langle e_1,v\rangle) &
 ||v||^2-|\langle v,e_1\rangle |^2
 \end{pmatrix}
 $$
 The equation for the eigenvalues of $A^*A$ is 
 $$
 \lambda^2-\lambda+\frac{(||v||^2-|\langle v,e_1\rangle |^2 )^2}{\beta^4}=0.
 $$
 The eigenvalues of $A^*A$ are 
 $$
 \lambda_{1,2}=\frac{1}{2}\pm\frac{1}{2}\frac{(\langle v,e_1\rangle+ \langle e_1,v\rangle) \sqrt { 4||v||^2+ (\langle v,e_1\rangle- \langle e_1,v\rangle) ^2}    }{\beta^2}=
 $$
 \begin{equation}
 \label{eigenv}
 \frac{1}{2}\pm\frac{1}{2}\frac{(\langle u,v\rangle+ \langle v,u\rangle) \sqrt { 4||u||^2||v||^2+ (\langle u,v\rangle- \langle v,u\rangle) ^2}    }
 {2||u||^2||v||^2+ \langle u,v\rangle^2 +\langle v,u\rangle^2}.
 \end{equation}
 The singular values of $A$ are the square roots of the eigenvalues of $A^*A$. The entanglement entropy of $b$ equals 
 \begin{equation}
 \label{e(b)}
 E(b)=-\lambda_1 \ln \lambda_ 1-\lambda_2\ln \lambda_2.
\end{equation}
Since for $0<x<1$ 
$$
-\ln x> 1-x,
$$
we have: 
\begin{equation}
\label{ineq}
E(b)\ge \lambda_1 (1-\lambda_1)+\lambda_2(1- \lambda_2)=1-\lambda_1^2-\lambda_2^2.
\end{equation}
Now, (\ref{bound}) is obtained by plugging (\ref{eigenv}) into (\ref{ineq}). \end{proof} 

\begin{remark}
 Since $u$ and $v$ in Theorem \ref{entropyth} are linearly independent, it follows that  the vector $b$ is entangled, i.e. $E(b)>0$. 
 This follows from the fact that the right hand side of the inequality (\ref{bound}) is positive. Another way to see it is to refer to (\ref{e(b)}) and to observe that in (\ref{eigenv}) 
 $$
\Bigl |  \frac{(\langle u,v\rangle+ \langle v,u\rangle) \sqrt { 4||u||^2||v||^2+ (\langle u,v\rangle- \langle v,u\rangle) ^2}    }
 {2||u||^2||v||^2+ \langle u,v\rangle^2 +\langle v,u\rangle^2}\Bigr | <1
 $$
 (it is straightforward to check that this inequality is equivalent to  
 $$
 (||u||^2||v||^2- \langle u,v\rangle \langle v,u\rangle) ^2 >0),
 $$ 
 hence 
 $0<\lambda_1<1$ and $0<\lambda_2<1$. 
\end{remark}

\subsection{Proof of Theorem \ref{thcompact}}
\begin{proof}
It follows from \cite{zeld} that there is a constant $A_0>0$ such that  as $k\to\infty$, $||\Theta_p^{(k)}||^2$ and $||\Theta_q^{(k)}||^2$ are asymptotic to 
\begin{equation}
\label{asymptnorms}
A_0k^n+O(k^{n-1}).
\end{equation}
It follows from \cite{mamar} that there are constants $A_1>0$, $A_2>0$ such that  as $k\to\infty$ 
$$
|\langle \Theta_p^{(k)},\Theta_q^{(k)}\rangle|\le A_1k^ne^{-A_2\sqrt{k} \ {\mathrm{dist}}(p,q) }.
$$
From (\ref{bound}) in Theorem \ref{entropyth} we get: 
$$
E_k(b_k)\ge \frac{1}{2}\frac{(1-\frac{|\langle \Theta_p^{(k)},\Theta_q^{(k)}\rangle |^2}{  ||\Theta_p^{(k)}||^2||\Theta_q^{(k)}||^2}  )^2}{( 
1 +\frac{\langle \Theta_p^{(k)},\Theta_q^{(k)}\rangle ^2+\langle \Theta_q^{(k)},\Theta_p^{(k)}\rangle^2} 
{2 ||\Theta_p^{(k)}||^2||\Theta_q^{(k)}||^2} )^2}. 
$$
As $k\to\infty$, 
$$
E_k(b_k)\ge \frac{1}{2}(1-\frac{|\langle \Theta_p^{(k)},\Theta_q^{(k)}\rangle |^2}{  ||\Theta_p^{(k)}||^2||\Theta_q^{(k)}||^2}  )^2
( 1 -\frac{\langle \Theta_p^{(k)},\Theta_q^{(k)}\rangle ^2+\langle \Theta_q^{(k)},\Theta_p^{(k)}\rangle^2} 
{2 ||\Theta_p^{(k)}||^2||\Theta_q^{(k)}||^2} )^2\ge 
$$
$$
\frac{1}{2}(1-\frac{A_1^2k^{2n}e^{-2A_2\sqrt{k} \ {\mathrm{dist}}(p,q)}}{  ||\Theta_p^{(k)}||^2||\Theta_q^{(k)}||^2}  )^4.
$$
The conclusion now follows from (\ref{asymptnorms}). 
\end{proof} 

\subsection{Proof of Theorem \ref{thcn}}
\begin{proof}
Using (\ref{bound}) in Theorem \ref{entropyth} we get: 
$$
E_k(b_k)\ge 2\frac{( ||\Theta_p^{(k)}||^2||\Theta_q^{(k)}||^2-|\langle \Theta_p^{(k)},\Theta_q^{(k)}\rangle |^2)^2}
{( 2 ||\Theta_p^{(k)}||^2||\Theta_q^{(k)}||^2+\langle \Theta_p^{(k)},\Theta_q^{(k)}\rangle ^2+\langle \Theta_q^{(k)},\Theta_p^{(k)}\rangle^2)^2}.
$$
By the reproducing property (\ref{reprod}) 
$$
 ||\Theta_p^{(k)}||^2=\Theta_p(p)=e^{k|p|^2}
 $$
 $$
 ||\Theta_q^{(k)}||^2=\Theta_q(q)=e^{k|q|^2}
 $$
 $$
 \langle \Theta_p^{(k)},\Theta_q^{(k)}\rangle=\Theta_p^{(k)}(q)=e^{kq^T\bar p}.
 $$
 Therefore
 $$
 E_k(b_k)\ge \frac{1}{2}\frac{ (1- e^{kq^T\bar p +kp^T\bar q-k|p|^2-k|q|^2} )^2 }{(1+\frac{e^{2kq^T\bar p}+e^{2kp^T\bar q}}{2e^{k|p|^2+k|q|^2}})^2}.
 $$
 As $k\to\infty$
$$
E_k(b_k)\ge \frac{1}{2} (1- e^{kq^T\bar p +kp^T\bar q-k|p|^2-k|q|^2} )^2 (1-\frac{e^{2kq^T\bar p}+e^{2kp^T\bar q}}{2e^{k|p|^2+k|q|^2}})^2.
 $$
and the conclusion follows. 
\end{proof} 

\section{Example for \ref{seccompact}} Let $M={\mathbb{CP}}^1$ and let $L\to M$ be the hyperplane bundle with the standard hermitian metric. The K\"ahler form is the Fubini-Study form on $M$. The Hilbert space $V=H^0(M,L)$ is isomorphic to the space of polynomials in $1$ and $z$, with the inner product 
$$
\langle f,g\rangle =\frac{2}{\pi}\int_{{\mathbb{C}}}\frac{f(z)\overline{g(z)}}{(1+|z|^2)^3}dx \ dy   .
$$
The monomials $e_0=1$ and $e_1=z$ form an orthonormal basis in $V$. 
For $p\in{\mathbb{C}}$ (an affine chart of $M$)  let $\Theta_p$ be the unique vector in $V$ defined by the property 
$$
\langle f,\Theta_p\rangle=f(p) 
$$
for all $f\in V$. It is immediate that 
$$
\Theta_p(z)=1+z\bar p
$$
and
$$
||\Theta_p||=\sqrt{1+|p|^2}.
$$  
Let us consider two particles at $p=x+i0$ and $q=-x+i0$, $x>0$. The associated state (\ref{stateb}) is 
$$
b_1=b_1(p,q)=\frac{1}{\sqrt{1+x^4}}e_0\otimes e_0+\frac{x^2}{\sqrt{1+x^4}}e_1\otimes e_1.
$$  
The Schmidt coefficients are $\alpha_0=\frac{1}{\sqrt{1+x^4}}$ and $\alpha_1=\frac{x^2}{\sqrt{1+x^4}}$. The entanglement entropy of $b_1$ equals 
\begin{equation}
\label{entropyx}
E(x)=-\alpha_1^2 \ln(\alpha_1^2)-\alpha_2^2 \ln (\alpha_2^2)=
\frac{(1+x^4)\ln (1+x^4)-x^4\ln (x^4) } {1+x^4}. 
\end{equation}
The graph of $E$ is shown in Figure \ref{fig}.  
From (\ref{entropyx}), we observe that $E(x)\to 0$ as $x\to \infty$, and the maximum value of $E(x)$ is attained at $x=1$.  
\begin{figure}
\includegraphics[width=2.5in]{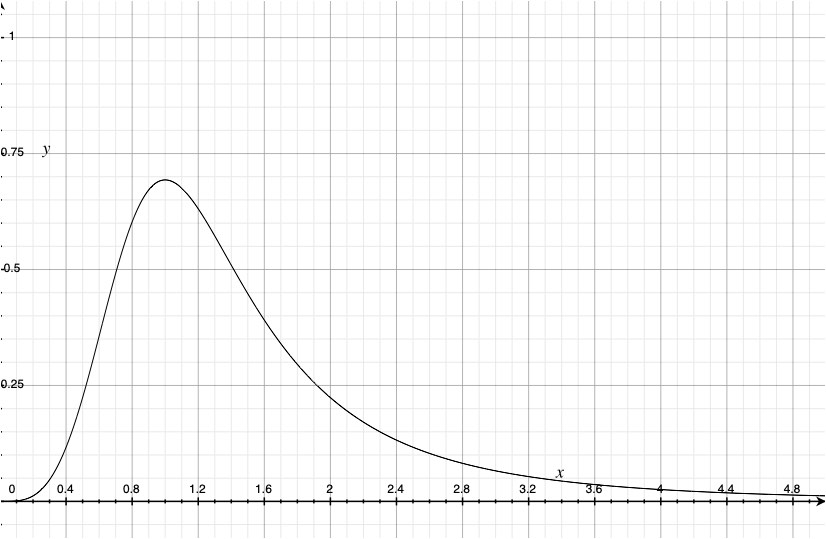}
\caption{The graph of $E(x)$, $x>0$.} \label{fig}
\end{figure} 

%\subsubsection{Acknowledgements} 

%Please place your acknowledgments at
%the end of the paper, preceded by an unnumbered run-in heading 

%
% ---- Bibliography ----
%
% BibTeX users should specify bibliography style 'splncs04'.
% References will then be sorted and formatted in the correct style.
%
% \bibliographystyle{splncs04}
% \bibliography{mybibliography}
%

\end{document}